\newtheorem{thm}{Theorem}[section]
\newtheorem{cor}[thm]{Corollary}
\newtheorem{lemma}[thm]{Lemma}
\newtheorem{theorem}[thm]{Theorem}
\newtheorem{remark}[thm]{Remark}
\newtheorem{corollary}[thm]{Corollary}
\newenvironment{proof}{{\bf Proof:}}{\hfill$\square$\vskip.5cm}
\newcommand{\R}{\mathbb{R}}
\newcommand{\N}{\mathbb{N}}
\newcommand{\Z}{\mathbb{Z}

}
\renewcommand{\Z}{{\mathbb{Z}}}
\begin{document} 

\title{Asymptotics of the d'Arcais Numbers at Small $k$}
\author{Shannon Starr}
\date{23 January 2026}

\maketitle

\abstract{
The d'Arcais numbers are the triangular array $\{A(2,n,k)\, :\, n=0,1,\dots,\, k=0,\dots,n\}$, such that
$\sum_{n=0}^{\infty} \sum_{k=0}^{n} A(2,n,k) x^k z^n/n! = 
((z;z)_{\infty})^{-x}$.
The infinite $q$-Pochhammer symbol is $(q;q)_{\infty} = \prod_{n=1}^{\infty} (1-q^n)$.
Holding $k$ fixed and considering large $n$, we note that the ratio $k! A(2,n,k)/n!$ is asymptotic to 
$C(k) \sigma_{2k-1}(n)/n^k$
where the divisor sum function is
$\sigma_p(n) = \sum_{d|n} d^p$
and $C(k) = (\zeta(2))^k/(\Gamma(k) \zeta(2k))$.
This is a slightly generalized version
of one of Ramanujan's formulas from his paper, 
``On Certain Arithmetical Functions," 
and it is an  immediate consequence of the more recent article of
Oliver, Shreshta and Thorne.
Heim and Neuhauser made a conjecture, that $A(2,n,k)/A(2,n,k-1)$ is greater than or equal to $A(2,n,k+1)/A(2,n,k)$,
for $k=2,3,\dots$ and all $n$.
The conjecture is false for $k=2$, and it is true for $k=3,4,\dots$ when $n$ is sufficiently large.
We  consider the Hardy-Ramanujan circle method as a heuristic step.
}

\section{Introduction}

In a previous article we considered a large deviation formula for the d'Arcais numbers \cite{LDPdArcais}. 
The d'Arcais numbers may be thought of in the following way.
Consider the abundance index
$$
\sigma_{-1}(n)\, =\,  \sum_{d|n} \frac{1}{d}\, =\, \frac{\sigma_1(n)}{n}\, =\, \sum_{d|n} \frac{d}{n}\, .
$$
Note that the generating function is 
$$
\sum_{n=1}^{\infty} \frac{\sigma_1(n)}{n} z^n\, 
=\, \sum_{k=1}^{\infty} \sum_{r=1}^{\infty} \frac{z^{kr}}{k}\, =\, -\sum_{r=1}^{\infty} \ln(1-z^r)\, ,
$$
and this may be rewritten as $-\ln((z;z)_{\infty})$ where the infinite $q$-Pochhammer symbol is 
$$
(q;q)_{\infty}\, =\, \prod_{n=1}^{\infty}(1-q^n)\, .
$$
The d'Arcais numbers are the coefficients in the two variable generating function
$$
\sum_{n=0}^{\infty} \sum_{k=0}^{n} \frac{A(2,n,k)}{n!}\, z^n x^k\, =\, \left((z;z)_{\infty}\right)^{-x}\, .
$$
From this description, they may also be written as the
so-called Bell transform of ($n!$ times) the abundance index:
$$
A(2,n,k)\, =\, \frac{n!}{k!} \sum_{(\nu_1,\dots,\nu_k) \in \N^k} \mathbf{1}_{\{n\}}(\nu_1+\dots+\nu_k) \prod_{j=1}^{k} \left(\frac{\sigma_1(\nu_j)}{\nu_j}\right)\, .
$$
The reason for the parameter $2$ is because the d'Arcais numbers are part of a more general multi-dimensional combinatorial array which we describe now.

Let $\pi^{(1)},\dots,\pi^{(\ell)} \in S_n$ be $\ell$ permutations of $[n]$. Suppose that they all commute with one another.
Next, consider the subgroup $\langle \pi^{(1)},\dots,\pi^{(\ell)} \rangle$ generated by these permutations.
Consider the action of this subgroup on $[n]$.
Then let $\mathcal{K}(\pi^{(1)},\dots,\pi^{(\ell)})$ denote the number of disjoint orbits partitioning $[n]$, under this action.
Let $A(\ell,n,k)$ be the number of $\ell$-tuples $(\pi^{(1)},\dots,\pi^{(\ell)}) \in S_n^{\ell}$ such that $\pi^{(1)},\dots,\pi^{(\ell)}$ commute
and $\mathcal{K}(\pi^{(1)},\dots,\pi^{(\ell)})=k$.
Bryan and Fulman gave a generating function for these
$$
\sum_{n=0}^{\infty} \sum_{k=0}^{n} \frac{A(\ell,n,k)}{n!}\, x^k z^n\, =\, \prod_{d_1,\dots,d_{\ell-1}=1}^{\infty} \left(1-z^{d_1\cdots d_{\ell-1}}\right)
^{-x d_1^{\ell-2} d_2^{\ell-3} \cdot d_{\ell-2}^1}\, ,
$$
as a formal power series. It converges if $|z|<1$.
Note $A(\ell,n,0)=0$ if $n>0$ but $A(\ell,0,0)=1$.
Abdesselam, Brunialti, Doan and Velie gave a concrete representation by showing that $A(\ell,n,k)$ counts the number of discrete tori, with twists, of dimension $k$
with $n$ points \cite{ABDV}.
This is also related to the wreath product which Bryan and Fulman used.

There is a conjecture by Heim and Neuhauser \cite{HeimNeuhauser} that $k \mapsto A(2,n,k)$ is log-concave: $A(2,n,k)/A(2,n,k-1) \geq A(2,n,k+1)/A(2,n,k)$.
Abdesselam stated the new conjecture that $k \mapsto A(\ell,n,k)$ is log-concave, for $\ell=3,4,\dots$.
He has now proved both conjectures in a number of important regimes \cite{Abdesselam,AbdesselamSole}.
Some of Abdesselam's methods involve asymptotics.

Indeed, with Abdesselam, the author has an article about the weak central limit theorem for $A(\ell,n,k)$ in the ``typicality regime,''
$k \sim x \sqrt{n}$ for some $x \in (0,\infty)$ \cite{AbdesselamStarr}.
That result is too weak to allow a concrete result such as log-concavity. (It gives circumstantial evidence since the
logarithm of the Gaussian density function is concav)).
Abdesselam then used the theory of iterated contour integrals to obtain a much harder complete asymptotic expansion down to orders $o(1)$.
For example, he uses tools as in Pemantle, Wilson and Melczer \cite{PemantleWilsonMelczer}.
Here we only consider $\ell=2$ and we explain why now.

Note that for $\ell=2$, the generating function is 
$$
\sum_{n=0}^{\infty} \sum_{k=0}^{n} \frac{A(2,n,k)}{n!}\, x^k z^n\, =\, \prod_{d=1}^{\infty} (1-z^d)^{-x}\, =\,
\exp\left(-x\sum_{d=1}^{\infty} \ln(1-z^d)\right)
$$
Thus, in particular
$$
\sum_{n=1}^{\infty}
\frac{A(2,n,1)}{n!} z^n\, =\, -\sum_{d=1}^{\infty} \ln(1-z^d)\, =\, \ln\left(\frac{1}{(z;z)_{\infty}}\right)\, .
$$
This is closely related to the Dedekind eta function
$$
\eta(\tau)\, =\, e^{\pi i \tau/12} \prod_{n=1}^{\infty} \left(1-e^{2n\pi i \tau}\right)\, =\, q^{1/24} (q;q)_{\infty}\, ,
$$
for $q=e^{2\pi i \tau}$, on the domain $\tau \in \{x+iy\, :\, x\in \R\, ,\ y>0\}$, the upper-half-plane.
This puts us in the domain of analytic number theory.
For instance, $\eta$ satisfies the modular symmetries:
$$
\eta(\tau+1)\, =\, e^{\pi i/12} \eta(\tau)\ \text{ and }\ \eta\left(-\frac{1}{\tau}\right)\, =\, \sqrt{-i\tau}\, \eta(\tau)\, ,
$$
where the branch of the square-root is such that $\sqrt{-i\tau}=1$ when $\tau=i$. See for instance \cite{Apostol}.
Moreover, the latter is a type of Poisson summation formula. It relates values of $\eta$ near the cusp at $0$ to
values near the cusp at $+i\infty$, where $\eta$ is less singular.
So, using the modular symmetry, optimal asymptotics are available with minimal effort.

For past results showing the breadth of such asymptotics, we mention Moak \cite{Moak} as well as Banerjee and Wilkerson \cite{BanerjeeWilkerson}.
Another important tool is the so-called Bell transform (so-named by contributors to OEIS).
An excellent reference is Chapter 1 of Pitman's monograph \cite{Pitman}.
Using these tools, we were able to re-prove part of Abdesselam's results only using a single contour integral instead of two.
Therefore, our touchstone was Flajolet and Sedgewick \cite{FlajoletSedgewick}.

We did use the idea of the circle method of Hardy and Ramanujan, but not the full Rademacher decomposition.
Instead we settled for an asymptotic result, while Rademacher's decompsotion is an exact formula involving a convergent series.
This allowed for a much easier proof, not unlike Newman's simplification \cite{Newman}.
In particular, we only needed one major arc, near the real axis.
But our results were constrained to the scenario where $k \to \infty$ as $n \to \infty$, so that $k$ is a large parameter.
In that article, we noted that to handle small values of $k$, even just asymptotically,  would require consideration of all Farey fractions.

In this note, we consider that analysis.
It quickly leads to formulas.
But we prove those formulas in other ways.
In the next section, we describe those formulas.

\section{Main results}

The  divisor function is 
$$
\sigma_p(n)\, =\, \sum_{d|n} d^p\, .
$$
We will primarily consider negative integer $p$.
But there is a relation $\sigma_{-p}(n) = \sigma_p(n)/n^p$.
Let us denote
$$
\sigma_{p}^{(q)}(n)\, =\, n^q \sigma_{p}(n)\, .
$$
Then let
$$
S^{(a,b)}_{r,s}(n)\, =\, \sum_{k=1}^{n-1} \sigma_{r}^{(a)}(k) \sigma_{s}^{(b)}(n-k)\, 
=\, \sum_{k=1}^{n-1} k^{a} \sigma_{r}(k) (n-k)^b \sigma_{s}(n-k)\, ,
$$
which is the arithmetical convolution $\sigma^{(a)}_r * \sigma^{(b)}_s(n)$.
\begin{theorem}
\label{thm:main}
For $a,b\geq 0$ and $r,s\geq 1$, we have
$$
S^{(a,b)}_{-r,-s}(n)\, \sim\, \frac{\Gamma(a+1)\Gamma(b+1)}{\Gamma(a+b+2)} \cdot 
\frac{\zeta(r+1)\zeta(s+1)}{\zeta(r+s+2)}\, 
\sigma_{-r-s-1}^{(a+b+1)}(n)\, ,\ \text{ as $n \to \infty$.}
$$
\end{theorem}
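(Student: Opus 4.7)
The plan is to open up the two divisor sums, exchange the order of summation, and reduce to a weighted sum over pairs $(d,e)$ in which the inner object is a Riemann sum for $\int_0^n x^a(n-x)^b\,dx$. First I would write $\sigma_{-r}(k)=\sum_{d\mid k} d^{-r}$ and $\sigma_{-s}(n-k)=\sum_{e\mid n-k} e^{-s}$, interchange sums, and obtain
$$
S^{(a,b)}_{-r,-s}(n)\, =\, \sum_{d,e\geq 1} d^{-r} e^{-s}\, T(d,e,n),\qquad T(d,e,n)\, =\, \sum_{\substack{1\leq k\leq n-1\\ d\mid k,\ e\mid n-k}} k^{a}(n-k)^{b}.
$$
The congruences $k\equiv 0\pmod d$ and $k\equiv n\pmod e$ are compatible precisely when $g:=\gcd(d,e)$ divides $n$, in which case, by CRT, the admissible $k$ form a single arithmetic progression with common difference $L:=\operatorname{lcm}(d,e)=de/g$.

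Next I would replace $T(d,e,n)$ by its Riemann-sum approximation. When $g\mid n$ and $L\leq n$, there are $(1+o(1))\, n/L$ values of $k$ in $[1,n-1]$, uniformly spaced, so
$$
T(d,e,n)\, =\, \frac{1}{L}\int_0^n x^{a}(n-x)^{b}\,dx\,+\,\text{error}\, =\, \frac{g}{de}\,B(a+1,b+1)\,n^{a+b+1}\,+\,\text{error},
$$
with $B(a+1,b+1)=\Gamma(a+1)\Gamma(b+1)/\Gamma(a+b+2)$. Substituting the leading term and using $n^{a+b+1}\sigma_{-r-s-1}(n) = \sigma_{-r-s-1}^{(a+b+1)}(n)$ yields a main term of the form
$$
B(a+1,b+1)\, n^{a+b+1}\sum_{\substack{d,e\geq 1\\ \gcd(d,e)\mid n}} d^{-r-1} e^{-s-1}\gcd(d,e).
$$

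The arithmetic sum factorizes cleanly. Substituting $d=gd'$, $e=ge'$ with $\gcd(d',e')=1$, the condition becomes $g\mid n$ and we are left with
$$
\sum_{g\mid n} g^{-r-s-1}\sum_{\gcd(d',e')=1}(d')^{-r-1}(e')^{-s-1}\, =\, \sigma_{-r-s-1}(n)\cdot \frac{\zeta(r+1)\zeta(s+1)}{\zeta(r+s+2)},
$$
where the second equality is the standard Euler-product identity $\sum_{\gcd(d',e')=1}(d')^{-u}(e')^{-v}=\zeta(u)\zeta(v)/\zeta(u+v)$, valid for $u,v\geq 2$, which is exactly our setting since $r,s\geq 1$. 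Assembling the pieces reproduces the statement of Theorem~\ref{thm:main}.

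The main obstacle is controlling the Riemann-sum error in $T(d,e,n)$ uniformly enough in $d,e$ that it dies after being weighted by $d^{-r}e^{-s}$ and summed. Since the ambient weights $d^{-r}e^{-s}$ with $r,s\geq 1$ are only borderline summable, one cannot afford a naive pointwise bound. I would split the $(d,e)$-plane into a ``main'' region $d,e\leq n^{1-\delta}$, where the Euler-Maclaurin error in the Riemann sum is of smaller order than the main term (and the weighted tail of the main term, which involves the stronger weight $d^{-r-1}e^{-s-1}\gcd(d,e)$, is summable), and a ``tail'' region where either $d$ or $e$ exceeds $n^{1-\delta}$. In the tail one has the crude bound $T(d,e,n)\leq (n/L+1)\,n^{a+b}$, which combined with $d^{-r}e^{-s}$ and summed over the tail is $o(n^{a+b+1})$ after a short dyadic decomposition. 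Carrying out these two estimates carefully is the one technical point; everything else is bookkeeping on the arithmetic identity above.
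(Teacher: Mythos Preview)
Your outline matches the paper's proof almost step for step: expand the two divisor sums, reduce via CRT to an arithmetic-progression sum with modulus $de/\gcd(d,e)$, approximate that sum by $n^{a+b+1}B(a+1,b+1)\gcd(d,e)/(de)$, and then collapse the resulting double sum using the identity $\sum_{\gcd(d',e')=1}(d')^{-u}(e')^{-v}=\zeta(u)\zeta(v)/\zeta(u+v)$. The only place you work harder than necessary is the error control: since $d,e\leq n-1$ automatically and the Halberstam-type error in $T(d,e,n)$ is $O(n^{a+b})$ \emph{uniformly} in the modulus, summing against $d^{-r}e^{-s}$ over $d,e\leq n-1$ gives a total error of $O\big(n^{a+b}(\ln n)^2\big)$ directly, so no splitting of the $(d,e)$-plane or dyadic decomposition is required.
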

This result is weaker than what is usually considered.
Ramanujan in \cite{Ramanujan}
and then Ingham \cite{Ingham}, Halberstam \cite{Halberstam1} 
and most recently Oliver, Shrestha and Thorne \cite{OliverShresthaThorne}
all considered the leading order remainder term as well.
We do not explore the remainder term, here.

If one took $a=r$ and $b=s$ then the result just says
$$
S_{r,s}(n)\, \sim\, \frac{\Gamma(r+1)\Gamma(s+1)}{\Gamma(r+s+2)} \cdot 
\frac{\zeta(r+1)\zeta(s+1)}{\zeta(r+s+2)}\, 
\sigma_{r+s+1}(n)\, ,\ \text{ as $n \to \infty$,}
$$
where
$$
S_{r,s}(n)\, =\, \sigma_r *\sigma_s(n)\, =\, \sum_{k=1}^{n-1} \sigma_r(k) \sigma_s(n-k)\, .
$$
In this form, this result is simply the original conclusion of Ramanujan.

The result is proved exactly as all of those results are. In particular Oliver, Shresthra and Thorne
divide the task into two steps, following Halberstam. The first step is to make the estimate
$$
\sum_{\substack{k \in \{1,\dots,n\} \\ k \equiv k_0 (\operatorname{mod} m)}} k^{\alpha-1} (n-k)^{\beta-1}\,
=\, \frac{n^{\alpha+\beta-1}}{m}\, B(\alpha,\beta) + O(n^{\alpha+\beta-2})\, ,
$$
for $\alpha,\beta\geq 1$, using the Beta integral $B(\alpha,\beta)$.
The leading order part is a power of $n$ depending on $\alpha$ and $\beta$,  times $m^{-1}$ times a constant.
The term $m^{-1}$ appears in the second part of their argument, but the factor $n^{\alpha+\beta-1}$ is taken outside the remaining sums.
Therefore, changing $\alpha$ and $\beta$ from $r+1$ and $s+1$ to instead equal $a+1$ and $b+1$ does not change the validity of the proof method.

With the goal of being self-contained, we will give details  of the proof in Section 4.
\begin{corollary}
\label{cor:alphaAsymp}
Define the quantities
$$
\alpha(n,k)\, =\, \frac{k!\, A(2,n,k)}{n!}\, =\,
 \sum_{(\nu_1,\dots,\nu_k) \in \N^k} \mathbf{1}_{\{n\}}(\nu_1+\dots+\nu_k) \prod_{j=1}^{k} \left(\frac{\sigma_1(\nu_j)}{\nu_j}\right)\, .
$$
For each $k \in 2,3,\dots$, we have
\begin{equation}
\label{eq:alphaAsymptotics}
\alpha(n,k)\, \sim\, \frac{\big(\zeta(2)\big)^k \sigma_{2k-1}(n)}{\Gamma(k) \zeta(2k) n^k}\, =\, 
 \frac{n^{k-1}\big(\zeta(2)\big)^k \sigma_{-2k+1}(n)}{\Gamma(k) \zeta(2k)}\, ,\ \text{ as $n \to \infty$.}
\end{equation}
\end{corollary}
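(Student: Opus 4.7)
The plan is to recognize $\alpha(\cdot,k)$ as the $k$-fold additive convolution of $\sigma_{-1}$, then induct on $k\geq 2$ with Theorem \ref{thm:main} furnishing each step. Extracting the coefficient of $x^k/k!$ in the two-variable generating function gives
$$
\sum_{n=0}^{\infty}\alpha(n,k)\,z^n\, =\, \bigl(-\ln(z;z)_{\infty}\bigr)^k\, =\, \biggl(\sum_{n=1}^{\infty}\sigma_{-1}(n)\,z^n\biggr)^{k}\, ,
$$
so $\alpha(n,k) = \sum_{\nu_1+\cdots+\nu_k=n}\prod_{j=1}^{k}\sigma_{-1}(\nu_j)$ and, in particular, $\alpha(n,k+1) = \sum_{\nu=1}^{n-1}\alpha(\nu,k)\,\sigma_{-1}(n-\nu)$. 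Write $C_k = \zeta(2)^k/(\Gamma(k)\zeta(2k))$ for the constant appearing in \eqref{eq:alphaAsymptotics}.

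The base case $k=2$ is immediate from Theorem \ref{thm:main} with $a=b=0$ and $r=s=1$, using $\Gamma(1)^2/\Gamma(2)=1$ together with $\sigma^{(1)}_{-3}(n)=\sigma_3(n)/n^2$. For the inductive step, suppose $\alpha(\nu,k)\sim C_k\,\sigma^{(k-1)}_{-(2k-1)}(\nu)$. Applying Theorem \ref{thm:main} with $a=k-1$, $b=0$, $r=2k-1$, $s=1$ yields an asymptotic proportional to $\sigma^{(k)}_{-(2k+1)}(n)=\sigma_{2k+1}(n)/n^{k+1}$ with prefactor $\frac{\Gamma(k)}{\Gamma(k+1)}\cdot\frac{\zeta(2k)\zeta(2)}{\zeta(2k+2)} = \frac{\zeta(2)\zeta(2k)}{k\,\zeta(2k+2)}$, and multiplying by $C_k$ collapses cleanly to $\zeta(2)^{k+1}/(\Gamma(k+1)\zeta(2k+2)) = C_{k+1}$. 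This closes the induction, modulo the subtlety discussed next.

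The hard part is the substitution itself: the bare asymptotic $\alpha(\nu,k)\sim C_k\,\sigma^{(k-1)}_{-(2k-1)}(\nu)$ does not automatically survive summation against $\sigma_{-1}(n-\nu)$. Two routes close the gap. The first is to upgrade Theorem \ref{thm:main} to a quantitative estimate of the form $S^{(a,b)}_{-r,-s}(n) = (\text{main})\,\sigma^{(a+b+1)}_{-r-s-1}(n) + O(n^{a+b})$ --- which the Halberstam / Oliver--Shrestha--Thorne argument actually supplies, though it is suppressed in the statement here --- and then to verify that these error terms remain subleading under iteration. The second, more self-contained, is to skip the intermediate $\alpha(\cdot,k)$ entirely and expand all $k+1$ copies of $\sigma_{-1}$ at once,
$$
\alpha(n,k+1)\, =\, \sum_{d_1,\dots,d_{k+1}\geq 1}\frac{N_{k+1}(n;d_1,\dots,d_{k+1})}{d_1\cdots d_{k+1}}\, ,
$$
where $N_{k+1}(n;d_1,\dots,d_{k+1})$ counts positive-integer solutions of $d_1m_1+\cdots+d_{k+1}m_{k+1}=n$, and then to run the Halberstam arithmetic-progression/lattice-point argument in $k+1$ variables in one stroke, with the divisor-of-$n$ structure of $\sigma_{-(2k+1)}(n)$ emerging from the $\gcd$ conditions on $(d_1,\dots,d_{k+1})$ relative to $n$. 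I would pursue the first route, since the quantitative form of Theorem \ref{thm:main} will already be available from its self-contained proof in Section 4.
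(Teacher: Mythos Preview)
Your proposal is correct and follows the same overall strategy as the paper: recognize $\alpha(\cdot,k)$ as the $k$-fold additive convolution of $\sigma_{-1}$ and induct on $k$ using Theorem \ref{thm:main} at each step, with the constants telescoping to $C_{k+1}$ exactly as you compute. You also correctly isolate the only genuine issue, namely that the inductive asymptotic $\alpha(\nu,k)\sim C_k\,\sigma^{(k-1)}_{-(2k-1)}(\nu)$ does not automatically survive convolution against $\sigma_{-1}(n-\nu)$.

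Where you and the paper diverge is in how that gap is closed. Your preferred route carries a quantitative remainder through the induction, exploiting the explicit $O\bigl(n^{a+b}(\ln n)^2\bigr)$ error that the Section~4 proof of Theorem \ref{thm:main} actually delivers; this works and in fact yields a stronger conclusion with an explicit error at each $k$. The paper (Appendix~\ref{sec:Induction}) instead uses only the bare asymptotic statement of Theorem \ref{thm:main}: it writes $\alpha(\nu,k)=\rho(\nu,k)\beta(\nu,k)$ with $\rho\to 1$, clamps $\rho$ into $[1-\epsilon,1+\epsilon]$, and observes that the discrepancy is supported on the $O_\epsilon(1)$ boundary terms with $\min(\nu,n-\nu)<N(K,\epsilon)$, each of size $O(n^{K-1}\ln\ln n)$ by Robin's bound and hence negligible against $\beta(n,K+1)\asymp n^{K}$. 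The paper's argument is softer and stays entirely at the level of $\sim$; yours is sharper but leans on the quantitative content hidden in Section~4. Your route 2 (a one-shot $(k{+}1)$-variable lattice-point count) would also work, but neither you nor the paper needs it.
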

\begin{proof}
The result may be rewritten as a formula for the $k$-fold arithmetical convolution
$$
\big(\sigma_{-1}^{(0)} * \cdots * \sigma_{-1}^{(0)}\big)(n)\, =\, 
\frac{\big(\zeta(2)\big)^k}{\Gamma(k) \zeta(2k)}\, \sigma_{-2k+1}^{(k-1)}\, .
$$
With this, it follows from Theorem \ref{thm:main} and induction on $k$.
\end{proof}

\begin{remark}
In our previous article, we gave a formula for the large deviation formula assuming $k \to \infty$.
If one looks carefully at that formula assuming $n/k \to \infty$, then one has
$$
\alpha(n,k)\, \sim\, \frac{e^{k \ln (\zeta(2)) - k \ln(k) + k + (k-1)\ln(n)}}{\sqrt{2\pi/k}}\, .
$$
This is precisely the $k \to \infty$ asymptotic expression of the above formula, if we replace $\sigma_{-2k+1}(n)$ by its Cesaro limit equivalent $\zeta(2k)$.
\end{remark}

In the interest of pedagogy, we include a detailed description of the induction step in Appendix \ref{sec:Induction}.
For now, let us assume that the reader sees how to fill in the steps for themselves.
Then
we have the following.
\begin{cor}
\label{cor:alpha}
For $k = 2$, 
$$
\liminf_{n \to \infty} \frac{\big(\alpha(n,k)\big)^2}{\alpha(n,k-1) \alpha(n,k+1)}\, =\, 0\, .
$$
For $k=3,4,\dots$, we have
$$
\liminf_{n \to \infty} \frac{\big(\alpha(n,k)\big)^2}{\alpha(n,k-1) \alpha(n,k+1)}\, =\,
\frac{k}{k-1} \cdot \frac{\zeta(2k+2)\zeta(2k-2)}{\big(\zeta(2k)\big)^2} \cdot \frac{\big(\zeta(2k-1)\big)^2}{\zeta(2k-3)\zeta(2k+1)}\, >\, 1.
$$
\end{cor}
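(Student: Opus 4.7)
The plan is to apply Corollary \ref{cor:alphaAsymp} to reduce the ratio to one of divisor sums, then use multiplicativity and AM--GM to identify the liminf via an Euler product, and finally dispatch the ``$>1$'' claim by a crude Euler-product bound on $\ln\zeta$.

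First I would substitute the asymptotics into the ratio; after canceling common factors of $\zeta(2)^{2k}$ and $n^{2k}$, the gamma identity $\Gamma(k-1)\Gamma(k+1)/\Gamma(k)^2 = k/(k-1)$ together with $\sigma_r(n) = n^r \sigma_{-r}(n)$ yields
$$
\frac{\alpha(n,k)^2}{\alpha(n,k-1)\alpha(n,k+1)} \sim \frac{k}{k-1} \cdot \frac{\zeta(2k-2)\zeta(2k+2)}{\zeta(2k)^2} \cdot \frac{\sigma_{-(2k-1)}(n)^2}{\sigma_{-(2k-3)}(n)\,\sigma_{-(2k+1)}(n)}\,.
$$
(For $k=2$ one uses the exact identity $\alpha(n,1) = \sigma_1(n)/n$ for the $k-1$ factor, which agrees with Corollary \ref{cor:alphaAsymp} formally extended to $k=1$.) The $k=2$ case is then immediate, because $\sigma_{-3}(n), \sigma_{-5}(n)$ stay bounded by $\zeta(3), \zeta(5)$ respectively, while $\sigma_{-1}(n) = \prod_{p|n}(1+p^{-1}+\cdots+p^{-v_p(n)})$ is unbounded on primorials, so the sigma-ratio vanishes along primorials.

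For $k \ge 3$, multiplicativity of $\sigma_{-r}$ factors the sigma-ratio as $F(n) = \prod_p G_p(v_p(n))$, where
$$
G_p(a) := \frac{\sigma_{-(2k-1)}(p^a)^2}{\sigma_{-(2k-3)}(p^a)\,\sigma_{-(2k+1)}(p^a)}\,.
$$
Using the closed form $\sigma_{-r}(p^a) = (1-p^{-r(a+1)})/(1-p^{-r})$ and the key identity $(2k-3)+(2k+1) = 2(2k-1)$, a short computation gives $G_p(a)/G_p(\infty) = (1-X)^2/[(1-Y)(1-Z)]$ with $X = p^{-(2k-1)(a+1)}$, $Y = p^{-(2k-3)(a+1)}$, $Z = p^{-(2k+1)(a+1)}$, satisfying $YZ = X^2$. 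AM--GM on $Y,Z$ gives $Y+Z \ge 2X$, and hence $G_p(a) \ge G_p(\infty)$. Thus $F(n) \ge \prod_p G_p(\infty)$, which by the Euler product equals $\zeta(2k-1)^2/[\zeta(2k-3)\zeta(2k+1)]$; equality is approached along $n_j = \prod_{p \le P_j} p^{N_j}$ with $P_j, N_j \to \infty$ sufficiently fast (a routine dominated-convergence argument for the tail, using $1-G_p(\infty) = O(p^{-(2k-3)})$). This yields the stated closed form for the liminf.

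The main obstacle is the ``$>1$'' claim itself. Setting $R(s) := \zeta(s-2)\zeta(s+2)/\zeta(s)^2$, the target rearranges to $k/(k-1) > R(2k-1)/R(2k)$. Expanding $\ln R(s)$ via the Euler product as $\sum_{p,j\ge 1} (p^j-p^{-j})^2 p^{-sj}/j$, the difference becomes
$$
\ln\frac{R(2k-1)}{R(2k)} = \sum_{p,j\ge1} \frac{(p^j-p^{-j})^2(p^j-1)}{j}\,p^{-2kj} < \sum_{p,j\ge1} \frac{p^{-(2k-3)j}}{j} = \ln\zeta(2k-3)\,,
$$
using the crude bound $(p^j-p^{-j})^2(p^j-1) < p^{3j}$. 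It then suffices to prove $k/(k-1) > \zeta(2k-3)$ for $k \ge 3$. For $k=3$ one verifies $3/2 > \zeta(3) \approx 1.202$ directly; for $k \ge 4$, the integral estimate $\zeta(s) \le s/(s-1)$ reduces the inequality to $k(2k-4) > (k-1)(2k-3)$, i.e.~$k > 3$.
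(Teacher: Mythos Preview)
Your proposal is correct and follows essentially the same route as the paper's proof (including its Appendices \ref{sec:divisor} and \ref{sec:calculus}): reduce via Corollary~\ref{cor:alphaAsymp} to a ratio of $\sigma_{-r}$'s, factor by multiplicativity and compare each local factor to its $a\to\infty$ limit to identify the liminf as the Euler product $\zeta(2k-1)^2/[\zeta(2k-3)\zeta(2k+1)]$, and then bound the combined zeta-ratio below by $1/\zeta(2k-3)$ via the same crude inequality $(p^j-1)(p^j-p^{-j})^2<p^{3j}$. The only cosmetic differences are that you phrase the local inequality $G_p(a)\ge G_p(\infty)$ via AM--GM rather than by completing the square, and you treat $k=3$ by the numerical check $\zeta(3)<3/2$ whereas the paper keeps the strict inequality from the Euler-product step to cover $k=3$ uniformly.
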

\begin{proof}
From Corollary \ref{cor:alphaAsymp}, we know
$$
\frac{\big(\alpha(n,k)\big)^2}{\alpha(n,k-1) \alpha(n,k+1)}\, \sim\, \frac{k}{k-1}\, \cdot\,
\frac{\zeta(2k+2) \zeta(2k-2)}{\big(\zeta(2k)\big)^2}\, \cdot\, \frac{\big(\sigma_{-2k+1}(n)\big)^2}{\sigma_{-2k+3}(n) \sigma_{-2k-1}(n)}\, .
$$
But it is an easy fact  that
$$
\frac{\big(\sigma_{-2k+1}(n)\big)^2}{\sigma_{-2k+3}(n) \sigma_{-2k-1}(n)}\, >\, \frac{\big(\zeta(2k-1)\big)^2}{\zeta(2k-3) \zeta(2k+1)}\, ,
$$
and that
$$
\liminf_{n \to \infty} \frac{\big(\sigma_{-2k+1}(n)\big)^2}{\sigma_{-2k+3}(n) \sigma_{-2k-1}(n)}\, =\, \frac{\big(\zeta(2k-1)\big)^2}{\zeta(2k-3) \zeta(2k+1)}\, .
$$
For completeness, we explain this in Appendix \ref{sec:divisor}.
Of course, $\zeta(1)=\infty$: that proves the first statement when $k=2$ because $2k-3=1$.
But note that for $k=3,4,\dots$,
$$
\frac{k}{k-1} \cdot \frac{\zeta(2k+2)\zeta(2k-2)}{\big(\zeta(2k)\big)^2} \cdot \frac{\big(\zeta(2k-1)\big)^2}{\zeta(2k-3)\zeta(2k+1)}\, 
>\, 1\, .
$$
This can be seen in various ways.
For example, see Appendix \ref{sec:calculus}.
\end{proof}

\begin{remark}
The Okounkov-Nekrasov polynomials are closely related to the d'Arcais polynomials.
In \cite{HongZhang}, Hong and Zhang announced their proof of Heim and Neuhauser's conjecture for log-concavity of the coefficients
of the Okounkov-Nekrasov polynomials at small $k$.
Our method shares some common points with their proof.
They have explicit bounds on sufficient sizes for $n$, for their bounds to apply.
That is valuable and we do not provide that.
But we do provide an explicit formula for the limit inferior.
\end{remark}

\begin{remark}
The most interesting regime is $k=xn^{1/2}$ for $x \in (0,1)$. This is the typicality regime, as Abdesselam points out in \cite{Abdesselam}.
In that regime, he obtained a full asymptotic expansion such that $A(\ell,n,k) \sim \widetilde{A}(\ell,n,k)$ as $n \to \infty$, for an explicit formula
found by himself. He can also handle $\ell=3,4,\dots$ as well as $\ell=2$.
In a separate paper, the author considered just $\ell=2$ (as in this note) and considered large deviations assuming $1\ll k $ and $k/n < 1-\epsilon$
for a fixed $\epsilon>0$ \cite{LDPdArcais}.
The interesting regime of $k/n \to 1$ is also treated by Raghavendra Tripathi \cite{Tripathi} where he proves uniform log-concavity in an interesting way.
\end{remark}

In the next section we describe the leading order contribution from the Hardy and Ramanujan circle method \cite{HardyRamanujan}
when applied to the problem of describing the asymptotics of the d'Arcais numbers.
To prove that the circle method leads to the formulas above would require the full apparatus of Rademacher \cite{Rademacher}.
We do not carry that out, instead using the method of the articles by Ramanujan, by Ingham, by Halberstam and by Oliver, Shrestha and Thorne.
But it is useful to see how the formulae are deduced.

\section{Heuristics from the circle method}

Recall that
$$
\alpha(n,k)\, =\, \frac{k! A(2,n,k)}{n!}\, .
$$
Start with the Cauchy integral formula
$$
\alpha(n,k)\, =\, e^{k \ln \big(F(t)\big)+nt} \int_{-\pi}^{\pi} e^{-in\theta} \left(\frac{F(t-i\theta)}{F(t)}\right)^k\, \frac{d\theta}{2\pi}\, ,
$$
for $t>0$,
where
$$
F(t)\, =\, -\ln\big((e^{-t};e^{-t})_{\infty}\big)\, =\, \sum_{n=1}^{\infty} \frac{\sigma(n)}{n}\, e^{-nt}\, .
$$
Due to the modular symmetry of $\eta(\tau) = e^{\pi i \tau/2} \exp(-F(-2\pi i \tau))$, we know
$$
F(t)\, =\, \frac{\zeta(2)}{t} - \frac{\ln(t)}{2} + \frac{\ln(2\pi)}{2} - \frac{t}{24} + F\left(\frac{4\pi^2}{t}\right)\, ,
$$
which leads to useful asymptotics when $t \to 0^+$.
\begin{figure}
\begin{center}
\begin{tikzpicture}
\draw (0,0) node[] {\includegraphics[width=10cm,height=5cm]{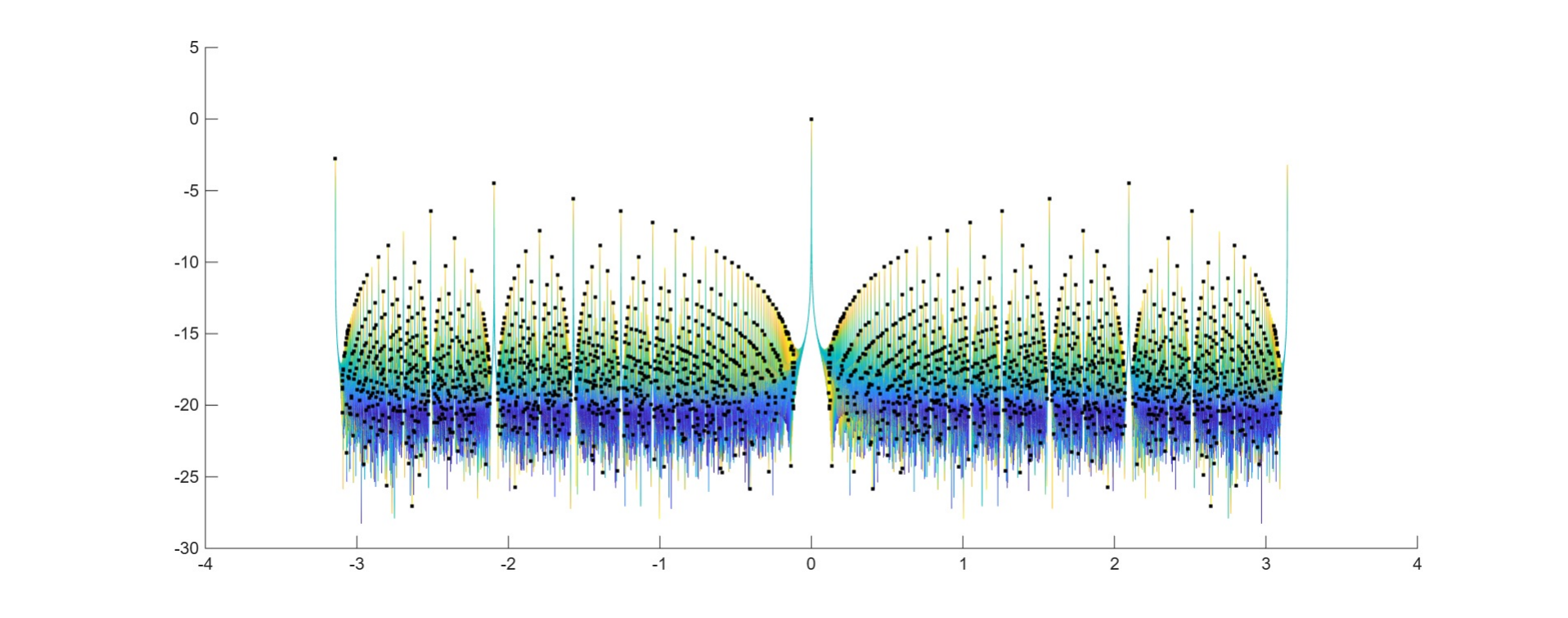}};
\draw (8,0) node[] {\includegraphics[width=8cm,height=5cm]{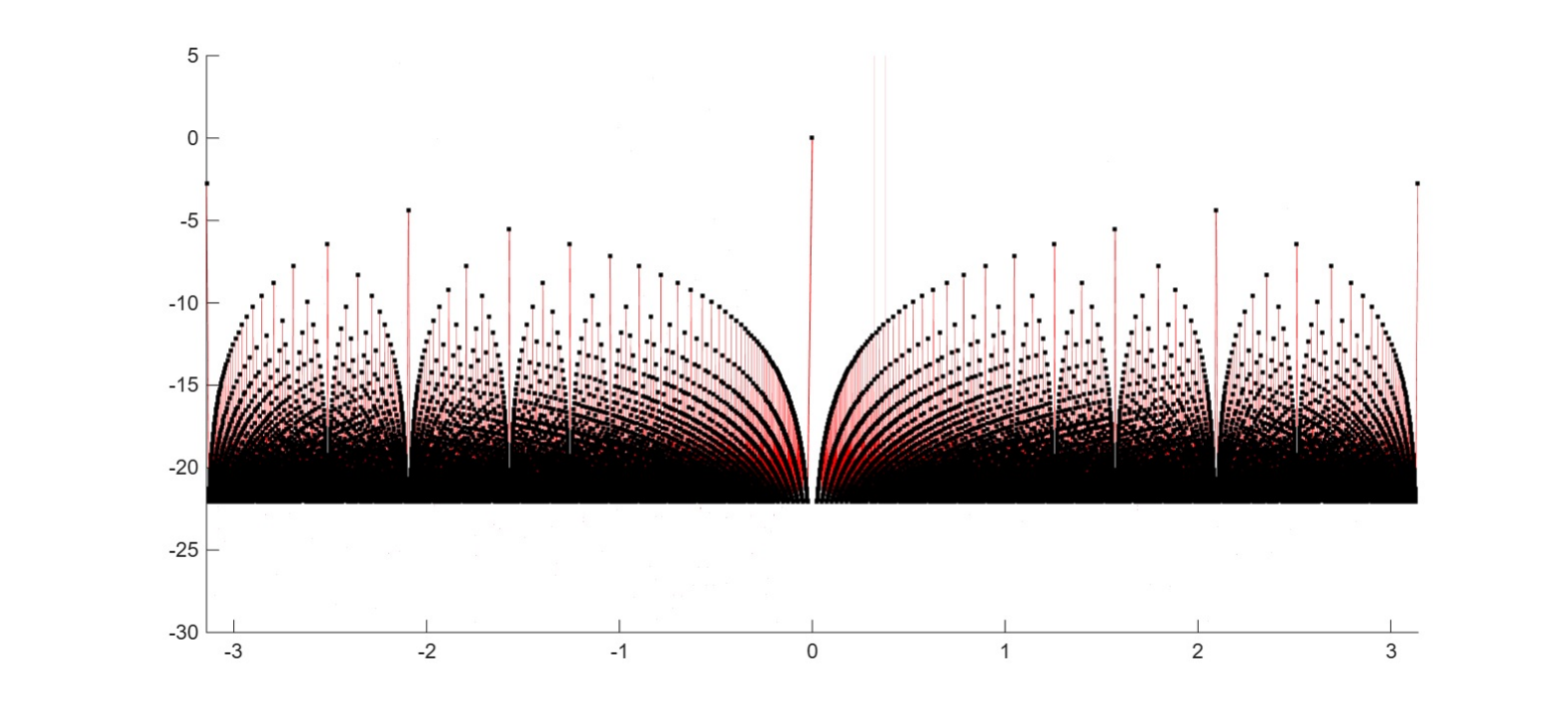}};
\end{tikzpicture}
\caption{
Here we plot two similar figures. On the left we have $\ln\left(|F(t-i\theta)|^2/\big(F(t)\big)^2\right)$ for $t=1/50,000$, calculated by including $250,000$ terms in the $n$-sum
for the power series of $F$. The colors represent  $\cos(\phi)$ when we write $F(t-i\theta)/F(t) = \rho e^{i\phi}$. The dots are placed whenever $\cos(\phi)>0.99$. On the right hand side, we have plotted $-4\ln(q)$ versus $p/q$ for the the Farey sequence with denominators up to 250, for comparison.
\label{fig:LogNormSquared}
}
\end{center}
\end{figure}
Due to this expansion, when choosing $t_{n,k}$ so that $-F(t_{n,k})/F'(t_{n,k}) = k/n$, we have $t_{n,k} \sim k/n$ as $n \to \infty$ (with $k$ fixed).
Then
$$
k \ln \big(F(t_{n,k})\big)+n t_{n,k}\, \sim\, k \ln(\zeta(2)) - k \ln(k/n)+k\, ,\ \text{ as $n \to \infty$.}
$$
That is the asymptotic formula for the constant prefactor to the contour integral.
But turning to the contour integral, we should focus on ``major arcs,'' and ``minor arcs'' of the integral.
The major arcs are centered at every Farey fraction.

Consider the ratio $F(t-i\theta)/F(t)$.
In Figure \ref{fig:LogNormSquared}, we plot $\ln(|F(t-i\theta)|^2/(F(t))^2)$ and the color represents $\cos(\phi)$ for $\phi$ such that $F(t-i\theta)/F(t) = \rho e^{i\phi}$.
(The  height of the curve is $\ln(\rho^2)$.)
Thus yellow represents points where the argument is nearly $\phi=0$, and blue represents points where the argument is nearly $\phi=\pi$.
We have also placed black dots at all the points where  $\cos(\phi)>0.99$, since these should represent the local maxima of $|F(t-i\theta)|/F(t)$
(as a function of $\theta$).
On the right plot in the figure, we have plotted points $(p/q,-4\ln(q))$ for all the Farey fractions in the sequence up to denominators equal to $250$.

The modular symmetry is such that, for a given integer matrix $(a,b;c,d)$ with $ad-bc=1$, we have
$$
\eta\left(\frac{a\tau+b}{c\tau+d}\right)\, =\, \epsilon(a,b;c,d) (c\tau+d)^{1/2} \eta(\tau)\, ,
$$
where
$$
\epsilon(a,b;c,d)\, =\, \begin{cases} e^{i\pi b/12} & \text{ if $c=0$, $d=1$,}\\
e^{i\pi(\frac{a+d}{12c}-s(d,c)-\frac{1}{4})} & \text{ if $c>0$,}
\end{cases}
$$
where $s(h,k)$ is the Dedekind sum
$$
s(h,k)\, =\, \sum_{n=1}^{k-1} \frac{n}{k} \left(\frac{hn}{k} - \left\lfloor \frac{hn}{k} \right\rfloor - \frac{1}{2}\right)\, .
$$
See, for example, Wikipedia.

Now suppose that $p$ and $q$ are relatively prime so that $p/q$ is a Farey fraction.
In order to move the cusp of $\eta$ at $p/q$ to the cusp at $+i\infty$, we take $c=q$ and $d=-p$.
Then, in order to obtain a unimodular matrix, we require
$$
-ap-bq=1\, .
$$
So, letting $x$ and $y$ be the integers from Euclid's algorithm $px+qy=1$, we have $a=-x$ and $b=-y$.
But even without calculating $(x,y)$ explicitly, it is easy to see that the leading order part of the asymptotics is 
$$
F\left(t(1-i\Theta)+\frac{2\pi i p}{q}\right)\, \sim\, \frac{\zeta(2)}{q^2 t(1-i\Theta)}\, ,\ \text{ as $t \to 0^+$,}
$$
where we have chosen to rescale $\theta = \frac{2\pi p}{q} + t \Theta$ in order to better capture the asymptotics.
Therefore,
$$
\frac{F\left(t(1-i\Theta)+\frac{2\pi i p}{q}\right)}{F(t)}\, 
\sim\, \frac{1}{q^2(1-i\Theta)}\, ,\ \text{ as $t \to 0^+$.}
$$
That explains the similarity in the left and right plots in Figure \ref{fig:LogNormSquared}.

Since this is raised to the power of $k$ in the integrand, we have
$$
\left(\frac{F\left(t(1-i\Theta)+\frac{2\pi i p}{q}\right)}{F(t)}\right)^k\, 
\sim\, \frac{1}{q^{2k}(1-i\Theta)^k}\, ,\ \text{ as $t \to 0^+$.}
$$
But it is also multiplied by $e^{-in\theta} = e^{-2\pi i n p/q} e^{-int\Theta}$.
Since $n t_{n,k} \sim k$, we consider the integral
$$
\int_{-\infty}^{\infty} \frac{e^{-ik\Theta}}{(1-i\Theta)^k}\, d\Theta\, =\, \frac{2\pi k^{k-1}}{e^k\, \Gamma(k)}\, ,
$$
which is formal when $k=1$ but rigorous when $k=2,3,\dots$.
Then recalling that we also have the prefactor of the contour integral
$$
e^{k \ln \big(F(t)\big)+nt}\, \sim\, \big(\zeta(2)\big)^k e^{-k \ln(k/n)} e^k\, ,
$$
we are led to the ansatz (because $d\theta/2\pi$ equals $ t d\Theta/2\pi \sim k d\Theta/(2\pi n)$) 
$$
\alpha(n,k)\, \sim\, \frac{n^{k-1}\big(\zeta(2)\big)^{k}}{\Gamma(k)} {\sum_{p/q}}' \frac{e^{-2\pi i np/q}}{q^{2k}}\, ,\ \text{ as $n \to \infty$.}
$$
where the summation is over Farey fractions. 

But this uses Ramanujan's sums.
As usual, define
$$
c_q(n)\, =\, \sum_{\substack{p \in \{1,\dots,q\} \\ 
(p,q)=1}} e^{2\pi i n p/q}\, .
$$
Then this says (taking the complex conjugate of our previous formula, because both sums are actually real)
$$
\alpha(n,k)\, \sim\, \frac{n^{k-1}\big(\zeta(2)\big)^{k}}{\Gamma(k)} \sum_{q=1}^{\infty} \frac{c_q(n)}{q^{2k}}\, ,\ \text{ as $n \to \infty$.}
$$
But $\sum_{q=1}^{\infty} c_q(n)/q^{2k}$ is known to equal $\sigma_{-2k+1}(n)/\zeta(2k)$.
So this leads to the ansatz
$$
\alpha(n,k)\, \sim\, \frac{n^{k-1} \big(\zeta(2)\big)^k}{\Gamma(k)} \cdot \frac{\sigma_{-2k+1}(n)}{\zeta(2k)}\, ,\ \text{ as $n \to \infty$.}
$$
Note that for $k=1$ this reads as an identity, since $\alpha(n,1) = \sigma_{-1}(n)$.

\section{Proof of the main result}

The idea of the proof is immediate from Oliver, Shrestha and Thorne. But we repeat their argument here, for completeness.

They start with the following lemma, which is Lemma 3.2 in their paper, which they attribute to Halberstam:
\begin{lemma}
Suppose $\alpha,\beta\geq 1$. Then
$$
\sum_{\substack{k \in \{1,\dots,n\} \\ k \equiv k_0 (\operatorname{mod} m)}} k^{\alpha-1} (n-k)^{\beta-1}\,
=\, \frac{n^{\alpha+\beta-1}}{m}\, B(\alpha,\beta) + O(n^{\alpha+\beta-2})\, .
$$
\end{lemma}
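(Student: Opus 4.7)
The approach is to recognize the sum as a discrete Riemann-type approximation to a Beta integral, sampled on an arithmetic progression of spacing $m$, and to control the discretization error via the total variation of the integrand.

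First, set $f(x)=x^{\alpha-1}(n-x)^{\beta-1}$ for $x\in[0,n]$, so that the sum is $\sum_j f(k_0+jm)$ over the integers $j$ with $k_0+jm\in\{1,\dots,n\}$. The substitution $x=nu$ yields
$$
\int_0^n f(x)\,dx\, =\, n^{\alpha+\beta-1}\int_0^1 u^{\alpha-1}(1-u)^{\beta-1}\,du\, =\, n^{\alpha+\beta-1}B(\alpha,\beta)\, ,
$$
so the claimed leading term $n^{\alpha+\beta-1}B(\alpha,\beta)/m$ is exactly $m^{-1}\int_0^n f$.

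Next, I would estimate the discretization error using total variation. The hypothesis $\alpha,\beta\geq 1$ ensures $f$ is continuous on $[0,n]$, and $f$ is either unimodal with maximum of order $n^{\alpha+\beta-2}$ at $x^*=n(\alpha-1)/(\alpha+\beta-2)$ (when $\alpha,\beta>1$), monotone (when exactly one of $\alpha,\beta$ equals $1$), or constant (when both do). In every case both $\operatorname{TV}(f)$ and $\max_{[0,n]} f$ are $O(n^{\alpha+\beta-2})$. A standard Koksma-type comparison between an arithmetic-progression sum of spacing $m$ and the corresponding integral then gives
$$
\left|\sum_{j} f(k_0+jm)\, -\, \frac{1}{m}\int_0^n f(x)\,dx\right|\, \leq\, \operatorname{TV}(f)\, +\, O\Big(\max_{[0,n]} f\Big)\, =\, O\big(n^{\alpha+\beta-2}\big)\, ,
$$
where the first term bounds the interior discretization error and the second absorbs the two boundary gaps of length at most $m$ at the endpoints of the progression inside $[1,n]$.

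The only technical subtlety is the edge case $\alpha=1$ or $\beta=1$, where $f$ does not vanish at an endpoint: one simply checks that $f(0)=n^{\beta-1}$ or $f(n)=n^{\alpha-1}$ is still $O(n^{\alpha+\beta-2})$, so the total-variation and maximum bounds both remain valid. No harder step is involved, and the implicit constant is naturally uniform in $k_0\in\{1,\dots,m\}$, which is all that the next section requires.
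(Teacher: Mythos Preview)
The paper does not supply its own proof of this lemma; it is quoted from Oliver--Shrestha--Thorne (their Lemma~3.2, attributed to Halberstam) and used as a black box. Your argument is correct and is essentially the standard one: compare the arithmetic-progression Riemann sum of mesh $m$ to $m^{-1}\int_0^n f$, bound the discrepancy by $\operatorname{TV}(f)+O(\max_{[0,n]} f)$, and use unimodality of $f(x)=x^{\alpha-1}(n-x)^{\beta-1}$ on $[0,n]$ to see that both quantities are $O(n^{\alpha+\beta-2})$.

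One small point worth making explicit: the bound is also uniform in $m$, not only in $k_0$, because neither $\operatorname{TV}(f)$ nor $\max_{[0,n]} f$ depends on $m$. This matters, since in the application that follows $m=de/(d,e)$ ranges over values up to order $n$ (or beyond, where the sum has at most one term and the statement is trivially absorbed into the error). Your displayed inequality already has this feature, so the fix is purely expository.
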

Recall that the Beta integral is $B(\alpha,\beta) = \int_0^1 x^{\alpha-1} (1-x)^{\beta-1}\, dx = \Gamma(\alpha) \Gamma(\beta)/\Gamma(\alpha+\beta)$.
The next result is their Lemma 3.3. But it is also a well-known result.
\begin{lemma}
For $r,s >1$, 
$$
\sum_{m=1}^{\infty} \sum_{\substack{n \in \N \\ (m,n)=1}} m^{-r} n^{-s}\, =\, \frac{\zeta(r)\zeta(s)}{\zeta(r+s)}\, .
$$
\end{lemma}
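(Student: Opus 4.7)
The plan is to recognize the indicator $\mathbf{1}_{(m,n)=1}$ through the Möbius identity $\sum_{d\mid (m,n)} \mu(d) = \mathbf{1}_{\{1\}}\big((m,n)\big)$, then interchange the order of summation. Concretely, I would begin by writing
$$
\sum_{m=1}^{\infty} \sum_{\substack{n\in \N \\ (m,n)=1}} \frac{1}{m^r n^s}\, =\, \sum_{m=1}^{\infty} \sum_{n=1}^{\infty} \frac{1}{m^r n^s} \sum_{d\mid (m,n)} \mu(d)\, .
$$

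Next, I would swap the summation order to put the sum over $d$ outermost. Because $d \mid m$ and $d \mid n$, I substitute $m = d m'$ and $n = d n'$ with $m', n' \geq 1$ free. This factors into a geometric/zeta piece in $d$ and a product of two independent zeta sums:
$$
\sum_{d=1}^{\infty} \mu(d) \sum_{m'=1}^{\infty} \sum_{n'=1}^{\infty} \frac{1}{(dm')^r (dn')^s}\,
=\, \left(\sum_{d=1}^{\infty} \frac{\mu(d)}{d^{r+s}}\right) \zeta(r) \zeta(s)\, .
$$
Finally, I would invoke the classical identity $\sum_{d=1}^{\infty} \mu(d)/d^{r+s} = 1/\zeta(r+s)$, valid since $r+s>2>1$, to obtain the stated formula $\zeta(r)\zeta(s)/\zeta(r+s)$.

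The only subtle point is justifying the interchange of the three summations. This is routine under the hypothesis $r,s>1$: the triple series $\sum_{d,m',n'} |\mu(d)|/(d^{r+s} (m')^r (n')^s)$ is bounded by $\zeta(r+s)\zeta(r)\zeta(s) < \infty$, so Fubini-Tonelli applies and all rearrangements are legitimate. I do not expect any real obstacle here; the argument is short and the hypotheses $r,s>1$ are exactly what is needed to ensure absolute convergence of every intermediate series. (If one wished to weaken the hypothesis, say to $\Re(r+s)>1$ while keeping only $\Re(r),\Re(s)>1$, one would still be fine for the same reason, but the authors state the lemma only for real $r,s>1$, so no analytic continuation step is needed.)
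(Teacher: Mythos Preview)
Your proof via M\"obius inversion is correct and is the standard argument for this classical identity; the absolute-convergence justification you give is exactly what is needed for the interchange. The paper does not supply its own proof of this lemma, merely citing it as a well-known result (Lemma~3.3 of Oliver, Shrestha and Thorne), so there is no alternative approach to compare against.
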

Then the rest of the argument follows from Oliver, Shrestha and Thorne's proof of their Theorem 3.1.

Write
$$
S_{-r,-s}^{(a,b)}(n)\, =\, \sum_{k=1}^{n-1} \sigma_{-r}(k) \sigma_{-s}(n-k) k^{a} (n-k)^{b}\, .
$$
Then expand out the divisor  functions
$$
S_{-r,-s}^{(a,b)}(n)\, =\, \sum_{k=1}^{n-1}  k^{a} (n-k)^{b} \sum_{d|k} d^{-r} \sum_{e|(n-k)} e^{-s}\, .
$$
Now interchange the order of summation
$$
S_{-r,-s}^{(a,b)}(n)\, =\,  \sum_{d=1}^{n-1} d^{-r} \sum_{e=1}^{n-1} e^{-s}
\sum_{\substack{k \in \{1,\dots,n-1\} \\ d|k \\ e|(n-k)}}   k^{a} (n-k)^{b} \, .
$$
Suppose that $(d,e) | n$, otherwise it is impossible to have any $k$ such that $d|k$ and $e|(n-k)$.
Then by  the Euclidean algorithm there is an $x,y \in \Z$ such that $xd+ye=(d,e)$.
Then choose $k_0 = xnd/(d,e)$. Then $k_0 + nye/(d,e) = n$. So $n-k_0 = nye/(d,e)$.
Thus we see $d|k_0$ and $e|(n-k_0)$ (because $n/(d,e)$ is an integer).
Now for any other $k$, writing $k=k_0 + (k-k_0)$ we see that the condition $d|k$ and $e|(n-k)$
means $d|(k-k_0)$ and $e|(k-k_0)$. Therefore, $k \equiv k_0 (\operatorname{mod} \frac{de}{(d,e)})$.
So
$$
S_{-r,-s}^{(a,b)}(n)\, =\,  \sum_{d=1}^{n-1} d^{-r} \sum_{e=1}^{n-1} e^{-s}
\sum_{\substack{k \in \{1,\dots,n-1\} \\ k \equiv k_0 (\operatorname{mod} \frac{de}{(d,e)})}}   k^{a} (n-k)^{b} \, .
$$
Therefore, by the first lemma
$$
S_{-r,-s}^{(a,b)}(n)\, =\,  \sum_{d=1}^{n-1} d^{-r} \sum_{e=1}^{n-1} e^{-s}
\left(n^{a+b+1}\, \left(\frac{(d,e)}{de}\right) B(a+1,b+1) +O(n^{a+b})\right)\, .
$$
Assuming $r,s\geq 1$, we have $\sum_{d=1}^{n-1} d^{-r} \leq \ln(n)$ and $\sum_{e=1}^{n-1} e^{-r} \leq \ln(n)$.
So
$$
S_{-r,-s}^{(a,b)}(n)\, =\,  O\left(n^{a+b} \big(\ln(n)\big)^2\right) + B(a+1,b+1) n^{a+b+1} \sum_{d=1}^{n-1} d^{-r} \sum_{e=1}^{n-1} e^{-s}
\mathbf{1}_{\N}\left(\frac{n}{(d,e)}\right)
\left(\frac{(d,e)}{de}\right)\, .
$$
At this point the argument is exactly as in Oliver, Shrestha and Thorne, because the power of $n$ is now outside the summation.
But we complete the argument in the interest of being self-contained.

Let us rename $w=(d,e)$. Then we can rewrite the sum as follows, letting $d=wi$ and $e=wj$,
$$
S_{-r,-s}^{(a,b)}(n)\, =\,  O\left(n^{a+b} \big(\ln(n)\big)^2\right) + B(a+1,b+1) n^{a+b+1} \sum_{w|n} \sum_{\substack{i,j \in \{1,\dots,(n/w)-1\}\\
(i,j)=1}} \frac{w^{-r-s} i^{-r} j^{-s}}{wij}\, .
$$
But then it is easy to see that
$$
\sum_{w|n} w^{-r-s-1} \sum_{\substack{i,j \in \{1,\dots,(n/w)-1\}\\
(i,j)=1}} i^{-r-1} j^{-s-1}\, =\, \sum_{w|n} w^{-r-s-1} \sum_{\substack{i,j \in \N \\ (i,j)=1}} i^{-r-1} j^{-s-1} + O(n^{-r}+n^{-s})\, ,
$$
where recall that we assumed $r,s>0$.
But then we see that the second lemma implies
$$
 \sum_{w|n} w^{-r-s-1} \sum_{\substack{i,j \in \N \\ (i,j)=1}} i^{-r-1} j^{-s-1}\, =\, \frac{\zeta(r+1) \zeta(s+1)}{\zeta(r+s+2)} \sum_{w|n} w^{-r-s-1}\, .
$$
This is $\zeta(r+s+2) \sigma_{-r-s-1}(n)$.

So then we have the desired result. More explicitly, we see that
\begin{equation*}
\begin{split}
S_{-r,-s}^{(a,b)}(n)\, 
&=\,  O\left(n^{a+b} \big(\ln(n)\big)^2\right) + O(n^{a+b+1-r} + n^{a+b+1-s}) \\
&\qquad
+ B(a+1,b+1)\, \frac{\zeta(r+1) \zeta(s+1)}{\zeta(r+s+2)}\, n^{a+b+1} \sigma_{-r-s-1}(n)\, .
\end{split}
\end{equation*}
Note that every divisor  function is bounded below by 1.
Therefore, this may be rewritten as 
$$
S_{-r,-s}^{(a,b)}(n)\, =\,  
B(a+1,b+1)\, \frac{\zeta(r+1) \zeta(s+1)}{\zeta(r+s+2)}\, n^{a+b+1} \sigma_{-r-s-1}(n) (1+ o(1))\, ,
$$
as $n \to \infty$.
This is the version of the asymptotic formula we claimed.

\section{Outlook and open problems}

A careful analysis of the remainder terms is important, and it was done for the original problem
considered by Ramanujan and then by Ingham, by Halberstam, and by Oliver, Shrestha and Thorne.
Our main interest is the asymptotic behavior of $\alpha(n,k)$ as in (\ref{eq:alphaAsymptotics}).
For this purpose we believe that an approach to the remainder term would be to complete the Rademacher analysis of the Hardy-Ramanujan circle method as begun in Section 3.
One advantage is that the modular symmetry is so accessible for the Dedekind-eta function.

If one wanted to apply the circle method to Theorem 2.1, then one would instead consider Eisenstein series
as well as their derivatives. These also have modular or near-modular properties that are amenable to the Rademacher series analysis.
We note that Halberstam did apply the circle method to Ramanujan's original formula in \cite{Halberstam2}.

We believe the most important open problem is to consider the $\ell=3,4,\dots$ sequence of the numbers $A(\ell,n,k)$,
to see if one can obtain asymptotics for small $k$.
For that, we would recommend as a starting point Abdesselam's important recent results \cite{AbdesselamSole}.
We note that his result could be considered to be a generalization of a result of Bringmann, Franke and Heim \cite{BFH}, 
in that he used a generalizations of Meinardus' theorem.
That is an approach to bypass using modularity or quasi-modularity, for example when that is not available.
That is the major impediment for our elementary approach.
We needed modularity to even obtain the conjecture using the first step of the Hardy-Ramanujan circle method.

\section*{Acknowledgments}
I am grateful to Raghav Tripathi for an interesting discussion with useful comments.
I am grateful to Malek Abdesselam for introducing me to this topic.

\appendix

\section{A detailed description of the induction step}
\label{sec:Induction}

Let us denote
$$
\beta(n,k)\, =\, 
\frac{n^{k-1}\big(\zeta(2)\big)^k \sigma_{-2k+1}(n)}{\Gamma(k) \zeta(2k)}\, .
$$
Then we are trying to prove that
$$
\alpha(n,k)\, \sim\, \beta(n,k)\, ,\ \text{ as $n \to \infty$.}
$$
Specializing to $k=1$ we note that $\alpha(n,1)=\beta(n,1)$ since $\alpha(n,k)$ equals the $k$-fold 
arithmetic convolution
$\sigma_{-1} * \cdots * \sigma_{-1}(n)$.

For $k=1$, the asymptotic equivalence is already established as an identity.
Now induct on $k$.
For the induction hypothesis, suppose $K \in \N$ and 
suppose we have proved the asymptotic equivalence for all $k\leq K$.
Note that $\sigma_{-2k+1}(n)\geq 1$ for all $k,n\in \N$.
So $\beta(n,k)$ is never $0$. 
Define $\rho(n,k)$ such that
$$
\rho(n,k)\, =\, \frac{\alpha(n,k)}{\beta(n,k)}\, ,
$$
so that $\alpha(n,k) = \rho(n,k) \beta(n,k)$.
Then, by the induction hypothesis,  the asymptotic equivalence statement implies the following.
For each $\epsilon>0$, there is a number $N(K,\epsilon) \in \N$ such that we have $| \rho(n,k) -1 | \leq\epsilon$
whenever we have both $k=1,\dots,K$ and $n\geq N(K,\epsilon)$.
Now let us use the Theorem to imply the induction step.

Considering $\alpha(n,K+1)$, the arithmetic convolution sum may be rewritten
$$
\alpha(n,K+1)\, =\, \sum_{m=1}^{n-1} \alpha(m,K) \alpha(n-m,1)\, =\,
\sum_{m=1}^{n-1} \rho(m,K) \rho(n-m,1) \beta(m,K) \beta(n-m,1)\, .
$$
Let us digress to note that
$$
\sum_{m=1}^{n-1} \beta(m,K) \beta(n-m,1)\, =\, C(K)C(1) \sum_{m=1}^{n-1} m^{K-1} \sigma_{-2K+1}(m) \sigma_{-1}(n-m)\, ,
$$ 
where $C(k)=\big(\zeta(2)\big)^k/(\Gamma(k)\zeta(2k))$.
But this means
$$
\sum_{m=1}^{n-1} \beta(m,K) \beta(n-m,1)\, =\, C(K)C(1) S^{(K-1,0)}_{-2K+1,-1}(n)\, ,
$$ 
using the notation as in the theorem.
So by the theorem, we do know
$$
\sum_{m=1}^{n-1} \beta(m,K) \beta(n-m,1)\, \sim\, C(K)C(1)\, \frac{\Gamma(K) \Gamma(1)}{\Gamma(K+1)} \cdot \frac{\zeta(2K)\zeta(2)}{\zeta(2K+2)}\, n^{K} \sigma_{-2K-1}(n)\, ,
$$
as $n \to \infty$. Since the right hand side equals $C(K+1) n^K \sigma_{-2K-1}(n)$, which is $\beta(n,K+1)$, this is the result we are aiming for.
But now we want to establish this for the actual sum 
$\sum_{m=1}^{n-1} \rho(m,K) \rho(n-m,1) \beta(m,K) \beta(n-m-1)$.

Let us define $\widetilde{\rho}(n,k,\epsilon)$ as
$$
\widetilde{\rho}(n,k,\epsilon)\, =\, \max(\{\min(\{\rho(n,k),1+\epsilon\}),1-\epsilon\})\, .
$$ 
So we have $\widetilde{\rho}(n,k,\epsilon) \in [1-\epsilon,1+\epsilon]$.
But also, for $n\geq N(K,\epsilon)$ we have $\widetilde{\rho}(n,k,\epsilon)=\rho(n,k)$.
So we may rewrite
\begin{equation*}
\begin{split}
\alpha(n,K+1)\, 
&=\, \sum_{m=1}^{n-1} \widetilde{\rho}(m,K,\epsilon)\widetilde{\rho}(n-m,1,\epsilon)  \beta(m,K) \beta(n-m,1) \\
&\qquad
+ \sum_{\max(\{m,n-m\}) \leq N(K,\epsilon)} \mathcal{E}(m,n,K,\epsilon)\, ,
\end{split}
\end{equation*}
where $\mathcal{E}(m,n,K,\epsilon)$ equals
$$
\Big(\rho(m,K) \rho(n-m,1) - \widetilde{\rho}(m,K,\epsilon) \widetilde{\rho}(n-m,1,\epsilon)\Big)  \beta(m,K) \beta(n-m,1) \, .
$$
From the digression above, and the bounds $(1-\epsilon)^2\leq  \widetilde{\rho}(m,K,\epsilon)\widetilde{\rho}(n-m,1,\epsilon)\leq (1+\epsilon)^2$,
we know that the first sum satisfies
$$
\limsup_{n \to \infty} \frac{1}{\beta(n,K+1)}\, \sum_{m=1}^{n-1} \widetilde{\rho}(m,K,\epsilon)\widetilde{\rho}(n-m,1,\epsilon)  \beta(m,K) \beta(n-m,1)\, \leq\, (1+\epsilon)^2\, ,
$$
and
$$
\liminf_{n \to \infty} \frac{1}{\beta(n,K+1)}\, \sum_{m=1}^{n-1} \widetilde{\rho}(m,K,\epsilon)\widetilde{\rho}(n-m,1,\epsilon)  \beta(m,K) \beta(n-m,1)\, \geq\, (1-\epsilon)^2\, .
$$

But the second sum is clearly bounded by 
$$
 \sum_{\max(\{m,n-m\}) \leq N(K,\epsilon)} \mathcal{E}(m,n,K,\epsilon)\, \leq\, N(K,\epsilon) \max_m |\mathcal{E}(m,n,K,\epsilon)|\, .
$$
By Robin's bound \cite{Robin}, and the induction hypothesis $\max_m |\mathcal{E}(m,n,K,\epsilon)| = O(n^{K-1}\ln(\ln(n)))$ as $n \to \infty$, holding
$\epsilon$ (and $K$) fixed. But since $\beta(n,K+1) = C(K+1) n^K \sigma_{-2K-1}(n)$, and since $\sigma_{-2K-1}(n)\geq 1$, we see that
$$
\lim_{n \to \infty} \frac{1}{\beta(n,K+1)}\, \sum_{\max(\{m,n-m\}) \leq N(K,\epsilon)} \mathcal{E}(m,n,K,\epsilon)\,  =\, 0\, .
$$

So, combining the bounds for the first and second sums, we have established that
$$
(1-\epsilon)^2\, \leq\, \liminf_{n \to \infty} \frac{\alpha(n,K+1)}{\beta(n,K+1)}\, \leq\, 
\limsup_{n \to \infty} \frac{\alpha(n,K+1)}{\beta(n,K+1)}\, \leq\, (1+\epsilon)^2\, .
$$
Since $\epsilon>0$ is arbitrary, this establishes the induction step.

\section{A review of the divisor  functions}
\label{sec:divisor}

Note that, if $n = p_1^{a_1} \cdots p_{\ell}^{a_{\ell}}$, then
$$
\sigma_d(n)\, =\, \prod_{j=1}^{\ell} \frac{p_j^{d(a_j+1)}-1}{p_j^{d}-1}
$$
so
\begin{equation*}
\begin{split}
\frac{\big(\sigma_{-2k+1}(n)\big)^2}{\sigma_{-2k+3}(n) \sigma_{-2k-1}(n)}\, 
&=\, \prod_{j=1}^{\ell} \frac{\left(1-p_j^{-(2k+1)}\right)\left(1-p_j^{-(2k-3)}\right)}{\left(1-p_j^{-(2k-1)}\right)^2}\\
&\qquad \cdot \prod_{j=1}^{\ell} \frac{\left(1-p_j^{(-2k+1)(a_j+1)}\right)^2}
{\left(1-p_j^{(-2k-1)(a_j+1)}\right)\left(1-p_j^{(-2k+3)(a_j+1)}\right)}
\end{split}
\end{equation*}
Let us consider these two factors separately $A(p_1,\dots,p_{\ell})$ and $B(p_1,a_1,\dots,p_{\ell},a_{\ell})$.
Starting with the latter, we note that 
$$
B(p_1,a_1,\dots,p_{\ell},a_{\ell})\, =\, 
\prod_{j=1}^{\ell} \frac{\left(1-p_j^{(-2k+1)(a_j+1)}\right)^2}
{\left(1-p_j^{(-2k+1)(a_j+1)}\right)^2-\left(p_j^{2(a_j+1)}+p_j^{-2(a_j+1)}-2\right)p_j^{(-2k+1)(a_j+1)}}
$$
Thus this is greater than 1. But if we fix $p_1,\dots,p_{\ell}$ and take the limit $a_1,\dots,a_{\ell} \to \infty$, then $B(p_1,a_1,\dots,p_{\ell},a_{\ell})$ converges to 1
(assuming $k\geq 1$).

Now considering the first factor,
$$
A(p_1,\dots,p_{\ell})\, =\, \prod_{j=1}^{\ell} \frac{\left(1-p_j^{-(2k-1)}\right)^2-\left(p_j^2+p_j^{-2}-2\right)p_j^{-(2k-1)}}{\left(1-p_j^{-(2k-1)}\right)^2}\, .
$$
In other words, it is a product of $\ell$ terms, each of which is less than 1.
Therefore, it is greater than the infinite product, which we obtain if we take  $p_1,\dots,p_{\ell}$ to equal the first $\ell$ primes and then take $\ell \to \infty$.
The infinite product is $\big(\zeta(2k-1)\big)^2/\big(\zeta(2k+1)\zeta(2k-3)\big)$.

\begin{remark}
If we take a sequence of increasingly high powers of increasingly large primorials, then we can approach the limits where both inequalities
are saturated. That is how we obtain the limit inferior result.
The limit inferior claim of  Corollary \ref{cor:alpha} comes from taking a sequence of increasingly high powers of increasingly large primorials.
Then we can approach the limits where both inequalities
are saturated. That is how we obtain the limit inferior result.
Consider $n = (p_m\#)^r$ for a sequence
$m$ and $r$ approaching $\infty$. 
\end{remark}

\begin{remark}
The most surprising result is that $\big(\alpha(n,2)\big)^2/\big(\alpha(n,1) \alpha(n,3)\big)$ has limit inferior equal to $0$.
Note that this means $\big(A(2,n,2)\big)^2 / \big(A(2,n,1)A(2,n,3)\big)$ has limit inferior equal to $0$ as well, since the latter merely equals the former times $(3! 1!)/(2!)^2$.
Similarly, for $k=3,4,\dots$ since the limit inferior of $\big(\alpha(n,k)\big)^2/\big(\alpha(n,k-1)\alpha(n,k+1)\big)$ is greater than 1, so is the limit inferior of 
$\big(A(2,n,k)\big)^2/\big(A(2,n,k-1)A(2,n,k+1)\big)$ since $((k+1)!(k-1)!)/(k!)^2=(k+1)/k>1$.
\end{remark}

\section{A calculus formula}
\label{sec:calculus}

Firstly, note that
$$
\ln\left(\frac{\zeta(2k+2)\zeta(2k-2)}{\big(\zeta(2k)\big)^2} \cdot \frac{\big(\zeta(2k-1)\big)^2}{\zeta(2k-3)\zeta(2k+1)}\right)\,
=\, -\sum_{p} \sum_{r=1}^{\infty} \frac{p^{-2kr}}{r}\, \left(p^r-1\right) \left(p^r-p^{-r}\right)^2\, .
$$
Hence,
$$
\ln\left(\frac{\zeta(2k+2)\zeta(2k-2)}{\big(\zeta(2k)\big)^2} \cdot \frac{\big(\zeta(2k-1)\big)^2}{\zeta(2k-3)\zeta(2k+1)}\right)\,
\geq\, -\sum_{p} \sum_{r=1}^{\infty} \frac{p^{-(2k-3)r}}{r}\, 
=\, -\ln\left(\zeta(2k+3)\right)\, .
$$
But, for instance using convexity (since the average of $x^{-2k+3}$ over the interval $[n-\frac{1}{2},n+\frac{1}{2}]$ is greater than or equal to its value at the midpoint of the interval) for $k=3,4,\dots$ we have
$$
\zeta(2k-3)\, =\, \sum_{n=1}^{\infty} \frac{1}{n^{2k-3}}\, \leq\, 1 + \int_{3/2}^{\infty} \frac{1}{x^{2k-3}}\, dx\, =\, 1 + \frac{1}{2(k-2)(3/2)^{2k-4}}\, .
$$
Thus, bounding $(3/2)^{2k-4} \geq 1$ (for $k\geq 2$)
$$
\frac{\zeta(2k+2)\zeta(2k-2)}{\big(\zeta(2k)\big)^2} \cdot \frac{\big(\zeta(2k-1)\big)^2}{\zeta(2k-3)\zeta(2k+1)}\, >\, \frac{1}{1 + \frac{1}{2k-4}}\, 
=\, \frac{2k-4}{2k-3}\, .
$$
So
$$
\frac{k}{k-1}\, \cdot\, \frac{\zeta(2k+2)\zeta(2k-2)}{\big(\zeta(2k)\big)^2} \cdot \frac{\big(\zeta(2k-1)\big)^2}{\zeta(2k-3)\zeta(2k+1)}\,
>\, \frac{k}{k-1} \cdot \frac{2k-4}{2k-3}\, =\, \frac{2k^2-4k}{2k^2-5k+3}\, ,
$$
and the right-most term is $\geq 1$ as long as $k\geq 3$ (because $2k^2-5k+3\leq 2k^2-4k$).

%
%
%
%
%
%
%
%
%
%
%
%
%
%
%
%
%
%
%
%
%
%
%
%
%
%
%
%

%


\baselineskip=12pt
\bibliographystyle{plain}

\begin{thebibliography}{10}

\bibitem{Abdesselam}
Abdelmalek Abdesselam.
\newblock Log-Concavity with Respect to the Number of Orbits for Infinite Tuples of Commuting Permutations.
\newblock {\em Ann.~Combin.} (2024)

\bibitem{AbdesselamSole}
Abdelmalek Abdesselam.
\newblock Proof of a conjecture by Starr and log-concavity for random commuting permutations.
\newblock {\em Preprint} (2025)
\newblock {\url{https://arxiv.org/abs/2506.06894}}


\bibitem{ABDV}
Abdemalek Abdesselam, Pedro Brunialiti, Tristan Doan and Philip Velie.
\newblock A bijection for tuples of commuting permutations and a log-concavity conjecture.
\newblock {\em Res.~Number Theory} (224) 10:45.

\bibitem{AbdesselamStarr}
Abdelmalek Abdesselam and Shannon Starr.
\newblock A central limit theorem for a generalization of the Ewens measure to random tuples of commuting permutations.
\newblock {\em Preprint} (2025).
\newblock {\url{https://arxiv.org/abs/2505.11469}}

\bibitem{Apostol}
\newblock Tom M.~Apostol.
\newblock {\em Introduction to Analytic Number Theory.}
\newblock Springer-Verlag, Berlin 1974.



\bibitem{BanerjeeWilkerson}
Subho Banerjee and Blake Wilkerson.
\newblock Asymptotic expansions of Lambert series and related $q$-series.
\newblock {\em Intern.~J.~Number Theor.} {\bf 13}, no.~8, pp.~2097--2113 (2017).
%

\bibitem{BFH}
Kathrin Bringmann, Johann Franke, Bernhard Heim.
\newblock
Asymptotics of commuting 
$\ell$-tuples in symmetric groups and log-concavity.
\newblock {\em Res.~number theory} {\bf 10}, 83 (2024).

\bibitem{BryanFulman}
Jim Bryan and Jason Fulman.
\newblock Orbifold Euler characteristics and the number of commuting n-tuples in symmetric groups.
\newblock {\em Ann.~Combin.} {\bf 2}, 1--6 (1998).


%
%
%



\bibitem{FlajoletSedgewick}
Phillipe Flajolet and Robert Sedgewick.
\newblock {\em Analytic Combinatorics.}
Cambridge University Press, Cambridge, UK 2009.
%

\bibitem{Halberstam1}
H.~Halberstam.
\newblock Four asymptotic formulae in the theory of numbers.
\newblock {\em J.~London Math.~Soc.} {\bf 24}, 13--21 (1949).

\bibitem{Halberstam2}
H.~Halberstam.
\newblock An asymptotic formula in the theory of numbers.
\newblock {\em Trans.~Amer.~Math.~Soc.} {\bf 84}, no.~2, 338--351 (1957).

\bibitem{HardyRamanujan}
G.~H.~Hardy and S.~Ramanujan.
\newblock Asymptotic formulae in combinatory analysis.
\newblock {\em Proc.~London Math.~Soc.} {\bf 2}, no.~17, 75--115 (1918).

\bibitem{HeimNeuhauser}
B.~Heim and M.~Neuhauser.
\newblock Horizontal and vertical log-concavity.
\newblock {\em Res.~Number~Theory} {\bf 7}, no.~1, Paper No.~18, 12 pages (2021).

\bibitem{HongZhang}
Letong Hong and Shengtong Zhang.
\newblock Towards Heim and Neuhauser's unimodality conjecture on the Nekrasov-Okounkov
polynomials.
\newblock {\em Res.~Number Theor.} {\bf 7}, article 17, 11 pages (2021).


\bibitem{Ingham}
A.~E.~Ingham.
\newblock Some asymptotic formulae in the theory of numbers.
\newblock {\em J.~London Math.~Soc.} {\bf 2}, 202--208 (1927).

%
%

\bibitem{Moak}
Daniel S.~Moak.
\newblock The $Q$-analogue of Stirling's Formula.
\newblock {\em Rocky Mountain J.~Math.} {\bf 14}, no.~2 (1984).
%

\bibitem{Newman}
D.~J.~Newman
\newblock A Simplified Proof of the Partition Formula.
\newblock {\em Michigan Math.~J.} {\bf 9}, no.~3, 283--287 (1962).
%

\bibitem{OliverShresthaThorne}
Robert J.~Lemke Oliver, Sunrose T.~Shrestha and Frank Thorne.
\newblock {\em Math.~Proc. Cambridge Philosoph.~Soc.} {\bf 174}, no.~1, 59--78 (2023).

\bibitem{PemantleWilsonMelczer}
Robin Pemantle, Mark C.~Wilson and Stephen Melczer.
\newblock {\em Analytic Combinatorics in Several  Variables. 2nd Ed.}
\newblock Cambridge University Press, Cambridge, UK 2024. 

\bibitem{Pitman}
Jim Pitman.
\newblock {\em Combinatorial Stochastic Processes, Ecole d'Et\'e de Probabilit\'es de Saint-Flour
XXXII-2002.}
\newblock Springer-Verlag, Berlin 2006.

\bibitem{Rademacher}
Hans Rademacher.
\newblock {\em Lectures on Analytic Number Theory.}
\newblock Tata Institute of Rundamental Research, Bombay, India 1955.

\bibitem{Ramanujan}
Srinavasa Ramanujan.
\newblock On certain arithmetical functions.
\newblock {\em Trans.~Cambridge Philosoph.~Soc.} {\bf 22}, no.~9, 159--184 (1916).


\bibitem{Robin}
Guy Robin.
\newblock Grandes valeurs de la fonctoion somme des diviseurs et hypoth\`ese de Riemann.
\newblock {\em J.~de~Math.~Pures~et~Appl.} Neuvi\`eme S\'erie, {\bf 63}, no.~2,187--213 (1987).


\bibitem{LDPdArcais}
S.~Starr.
\newblock Large Deviation Principle for the d'Arcais numbers.
\newblock {\em Preprint}, 2026.
\newblock \url{https://arxiv.org/abs/2601.07103}

%
%
%

\bibitem{Tripathi}
Raghavendra Tripathi.
\newblock On log-concavity of the number of orbits in commuting tuples of permutations.
\newblock {\em Res.~Number Theor.} {\bf 10}, article number 78, 9 pages (2024).

%


\end{thebibliography}

\end{document}